\documentclass{amsart}
\newtheorem{theorem}{Theorem}[section]

\usepackage{graphicx}
\theoremstyle{definition}
\newtheorem{definition}[theorem]{Definition}

\theoremstyle{remark}

\numberwithin{equation}{section} \setlength{\parindent}{15pt}
\begin{document}
\title{Modular Logarithm Unequal}
\author{Wu Sheng-Ping 
}
\email{hiyaho@126.com}
\address{
Tianmen, Hubei Province, The People's Republic of
China. Postcode: 431700 }
\keywords{Diophantine Equation; Discrete Logarithm}
\date{Jan 27, 2020}

\begin{abstract} The main idea of this article is simply calculating integer functions
in module. The algebraic in the integer modules is studied in completely new
style. By a careful construction, a result is proven that two finite numbers are with
unequal logarithms in a corresponding module, and is applied
to solving a kind of high degree diophantine equation.
\end{abstract}
\maketitle
In this paper, $p$ is prime, $C$ means a constant. All numbers that are
indicated by Latin letters are integers unless with further indication.
\section{Function in module}
\begin{theorem} Define the congruence class \cite{p1} in the form:
\[
[a]_q:=[a+kq]_q,\forall k\in\mathbf{Z}
\]
\[
[a=b]_q: [a]_q=[b]_q
\]
\[
[a]_q[b]_{q'}:=[x]_{qq'}: [x=a]_q,[x=b]_{q'},(q,q') = 1
\]
then
\[
[a+b]_q=[a]_q+[b]_q
\]
\[
[ab]_q=[a]_q\cdot[b]_q
\]
\[
[a+c]_q[b+d]_{q'}=[a]_q[b]_{q'}+[c]_q[d]_{q'},(q,q') = 1
\]
\[
[ka]_q[kb]_{q'} = k[a]_q[b]_{q'},(q,q') = 1
\]
\end{theorem}
\begin{theorem}The integer coefficient power-analytic functions modulo $p$ are all the functions from mod
$p$ to mod $p$
\[
[x^0=1]_p
\]
\[
[f(x)=\sum^{p-1}_{n=0}f(n)(1-(x-n)^{p-1})]_p
\]
\end{theorem}
\begin{theorem}(Modular Logarithm) Define
\[
[lm_a(x):=y]_{p^{m-1}(p-1)}: [a^y=x]_{p^m}
\]
\[
[E:=\sum^{m'}_{i=0}p^i/i!]_{p^m}
\]
\[
1<<m<<m'
\]
then
\[
[E^x=\sum^{m'}_{i=0}x^ip^i/i!]_{p^m}
\]
\[
[lm_E(1-xp)=-\sum^{m'}_{i=1}(xp)^{i}/(ip)]_{p^{m-1}}
\]
\[
[Q(q)lm(1-xq)=-\sum^{m'}_{i=1}(xq)^i/i]_{q^m}
\]
\[
Q(q):=\prod_{p|q}[p]_{p^m}
\]
Define
\[
[lm(x):=lm_e(x)]_{p^{m-1}}
\]
e is the generating element in mod p and meets
\[
[e^{1-p^{m'}}=E]_{p^m}
\]
\end{theorem}
It's proven by comparing to the Taylor expansions of real exponent and logarithm (especially on the coefficients).
\begin{definition}
\[
[lm(px):=plm(x)]_{p^m}
\]
\end{definition}
\begin{definition}
\[
P(q):=\prod_{p|q}p
\]
\end{definition}
\begin{definition}
\[
{_q}[x]:=y:[x=y]_q,0\leq y<q
\]
\end{definition}

\section{Unequal Logarithms of Two Numbers}
\begin{theorem} If
\[
P(q)b+a<q
\]
\[
a>b>0
\]
\[
(a,b)=(a,q)=(b,q)=1
\]
then
\[
[lm(a)\neq lm(b)]_{q}
\]
\end{theorem}
\begin{proof}
Define
\[
r:=P(q)
\]
\[
\beta:=\prod_{p:p|q}[(a/b)^{v_p-1}]_{p^m},1<<m
\]
\[
v_p:=[p]_{p^m(p-1)}
\]
\par
Set
\[
0<x,x'<q
\]
\[
0<y,y'<qr+r
\]
\[
d:=(x-x',q^m)
\]
Consider
\[
[(x,y,x',y')=(b,a,b,a)]_{r}
\]
\[
[\beta^2a^{2}x^2-b^{2}y^2=\beta^2a^{2}x'^2-b^{2}y'^2=:(2,q)qN]_{q^2},(N,q)=1
\]
Checking the freedom and determination of $(x,y),(x',y')$, and using the Drawer Principle,
we find that there exist \emph{distinct} $(x,y),(x',y')$ satisfying the previous conditions.
\par
Presume
\[
(qr^n,p^m)||\beta -1,n\geq 0
\]
\[
(d,p^m)|q/r
\]
Make
\[
(s,t,s',t'):=(x,y,x',y')+qZ(b,\beta a,0,0)
\]
to set
\[
[\beta^2a^{2}s^2-b^{2}t^2=\beta^2a^{2}s'^2-b^{2}t'^2]_{p^m}
\]
Make
\[
(X,Y,X',Y'):=(s,t,s',t')+qZ'(s',-t',s,-t)
\]
to set
\[
[aX-bY=aX'-bY']_{p^m}
\]
hence
\[
[\beta^2a(X+X')=b(Y+Y')]_{p^m}
\]
\par
The variables of fraction $z,z'$ meet the equation
\[
[(aX+z)^2-(bY-\beta z')^2=(aX'+z')^2-(bY'-\beta z)^2]_{p^{m}}
\]
It's equivalent to
\[
[2(aX-\beta bY')z-2(aX'-\beta bY)z'+(1+\beta^2)(z^2-z'^2)+(a^2X^2-a^2X'^2)(1-\beta^2)=0]_{p^m}
\]
\[
[(1+\beta)(aX-aX')(z+z')+(1-\beta^3)(aX+aX')(z-z')+(1+\beta^2)(z^2-z'^2)
\]
\[
=-(a^2X^2-a^2X'^2)(1-\beta^2)]_{p^m}
\]
\[
[(z-z'+\frac{1+\beta}{1+\beta^2}a(X-X'))(z+z'+\frac{1-\beta^3}{1+\beta^2}(aX+aX'))=\frac{\beta(1-\beta^2)}{(1+\beta^2)^2}(a^2X^2-a^2X'^2)]_{p^m}
\]
In another way
\[
[(aX-bY+z+\beta z')(aX+bY+z-\beta z')=(aX'-bY'+\beta z+z')(aX'+bY'-\beta z+z'))]_{p^{m}}
\]
Make by choosing a valid $z-z'$
\[
[aX+bY+z-\beta z'=aX'+bY'-\beta z+z']_{p^m}
\]
then
\[
[aX-bY+z+\beta z'=aX'-bY'+\beta z+z']_{p^m}
\]
It's invalid, hence
\begin{equation}\label{1}
[x=x']_{(q,p^m)}\vee\neg(qr^n,p^m)||\beta-1
\end{equation}
\par
The case for $p=2$ is similar.
\par
If
\[
[\beta-1=0]_{p^l}
\]
then
\[
[a^{p-1}-b^{p-1}=0]_{p^l}
\]
\[
l<C
\]
\par
Furthermore
\begin{equation}\label{2}
q|\beta-1\wedge[x=x']_{q}=0
\end{equation}
because if not,
\[
[\beta ax-by=\beta ax'-by']_{q^2}
\]
\[
[ax-by=ax'-by']_{q^2}
\]
\[
|ax-by-(ax'-by')|<q^2
\]
\[
ax-by=ax'-by'
\]
therefore
\[
x-x'=0=y-y'
\]
It contradicts to the previous condition.
\par
So that with the condition \ref{1}
\[
\neg(qr^n,p^m)||\beta -1=[x=x']_{(q,p^m)}\wedge\neg(qr^n,p^m)||\beta -1\vee [x\neq x']_{(q,p^m)}
\]
Wedge with $(qr^n,p^m)|\beta -1$
\[
(qr^{n+1},p^m)|\beta -1=(qr^{n+1},p^m)|\beta -1\wedge [x=x']_{q}
\]
With the condition \ref{2}
\[
qr|\beta -1=0
\]
\end{proof}
\begin{theorem} For prime p and positive integer q the equation
$a^p+ b^p= c^q$
has no integer solution (a,b,c) such that $(a,b)=(b,c)=(a,c) =1,a,b>0$ if
$p,q>8$.
\end{theorem}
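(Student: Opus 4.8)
The plan is to argue by contradiction, feeding a hypothetical solution into the two logarithm--inequality theorems that immediately precede this statement. Suppose $a^p+b^p=c^q$ with $a,b>0$, $(a,b)=(b,c)=(a,c)=1$, $p$ prime, and $p,q\ge 41$; since $(a,b)=1$ and $a=b$ would force $a=b=1$ and $c^q=2$, we may assume $a>b>0$ (and $c\ge 2$). The device is to run the earlier theorems with their modulus ``$q$'' specialized to $Q:=c^q$. The point is that all their hypotheses then hold automatically: $(a,Q)=(b,Q)=1$ because $(a,c)=(b,c)=1$; the divisibility condition $P^{11}(Q)\mid Q$ holds because $P(Q)=P(c)$ and $q\ge 11$; and the size condition $0<b<a<Q/P^3(Q)=c^q/P(c)^3$ holds because $a^p<c^q$ gives $a<c^{q/p}$, while $c^{q/p}P(c)^3<c^q$ reduces to $3<q(1-1/p)$, clear for $p,q\ge 41$. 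Hence the preceding theorem yields $[lm(a)\neq lm(b)]_{Q^4/P^5(Q)}$, provided the side condition $[a^2\neq b^2]_\ell$ holds for at least one prime $\ell\mid c$.

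Next I would extract the logarithmic relation that the equation itself imposes. Reducing $a^p+b^p=c^q$ modulo $Q=c^q$ gives $a^p\equiv -b^p$, and since $a,b$ are units modulo $Q$ the composite modulated logarithm is multiplicative, so
\[
p\,lm(a)=lm(-1)+p\,lm(b)\pmod{E},
\]
where $E$ is the exponent of the unit group modulo $Q$ and, by the analogue of the lemma computing $lm(-1)$, $lm(-1)=E/2$. Thus $p\,(lm(a)-lm(b))\equiv E/2\pmod E$, which pins the difference $lm(a)-lm(b)$ to the single residue $(E/2)p^{-1}$ modulo $E/\gcd(p,E)$; because $p$ is an odd prime and the relevant part of $E$ is governed by $P(c)$ and $q$, one checks $p$ is invertible here, so the difference is determined exactly. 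The goal is to show this forced value is incompatible with the inequality of the first step, i.e.\ that the equation secretly forces $[lm(a)=lm(b)]$ in the modulus $Q^4/P^5(Q)$.

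To close the gap I would exploit the prime-by-prime structure already used inside the proofs of the logarithm theorems. For each prime $\ell\mid c$ the relation $(a/b)^p\equiv -1$ holds to high $\ell$-power precision, so the order of $a/b$ divides $2p$ but not $p$; when $\ell\not\equiv 1\pmod p$ the order--$2p$ possibility is excluded and one is forced into $a\equiv -b$, whence $lm(a)-lm(b)\equiv lm(-1)$ on that component, collapsing the forced residue above to the degenerate case. Synthesizing these components across all $\ell\mid c$ through the composite-logarithm definition, the only way to avoid a clash with $[lm(a)\neq lm(b)]_{Q^4/P^5(Q)}$ is to land in the excluded side condition, namely $[a^2=b^2]_\ell$ for every prime $\ell\mid c$; by the last sentence of the first logarithm theorem this already forces $a^2=b^2$ outright, contradicting $0<b<a$ with $(a,b)=1$.

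The step I expect to be the genuine obstacle is the second-to-last one: making the composite-modulus logarithm algebra fully rigorous. Over a prime power the logarithm is clean, but modulo $Q=c^q$ it is only defined componentwise through $[lm(x)]_{\prod p_i^{n_i}}=\prod_i[lm(x)]_{p_i^{n_i}}$, so ``$lm(-1)=E/2$'' and the inversion of $p$ must be tracked on each component and then reassembled, and it is precisely the interaction between the order-$2p$ versus order-$2$ dichotomy at the various primes $\ell\mid c$ and the size/divisibility budget $p,q\ge 41$ that has to be balanced to force the degenerate case rather than merely a consistent nonzero difference. Verifying that the numerology (the exponents $11$, $5$, $4$, $3$, $2$ and the threshold $41$) actually closes --- rather than leaving a genuine family of admissible logarithmic differences --- is the crux on which the whole argument stands or falls.
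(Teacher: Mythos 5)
Your opening move --- take logarithms modulo $c^q$ and feed the hypothetical solution into the two preceding theorems --- is exactly what the paper announces, but that is essentially \emph{all} the paper announces: its entire proof is the sentence ``make logarithm in mod $c^q$'' together with the unexplained inequality $\frac{q}{p}\leq 6[(q-2)/39]$, so there is no detailed argument to reconstruct. Judged on its own terms, your proposal has a genuine gap, and it is the one you flag yourself. From $a^p\equiv -b^p \pmod{c^q}$ you obtain $p\,(lm(a)-lm(b))\equiv lm(-1)$, so the difference $lm(a)-lm(b)$ is pinned to one specific, generally \emph{nonzero} residue. The preceding theorems assert only that this difference is nonzero, i.e.\ $[lm(a)\neq lm(b)]$. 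A specific nonzero value and ``nonzero'' are perfectly compatible, so at this point there is no contradiction at all; the entire burden falls on your third paragraph, which tries to collapse the forced value into a degenerate case.

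That collapse does not go through. The order of $a/b$ modulo a prime $\ell\mid c$ divides $2p$, and you discard the order-$2p$ branch only ``when $\ell\not\equiv 1\pmod p$''; but nothing in the hypotheses prevents \emph{every} prime divisor of $c$ from being $\equiv 1\pmod p$, and this is precisely the hard case in every elementary attack on Fermat-type equations. Indeed, if your dichotomy did collapse at all primes, you would need none of the logarithm machinery: $a\equiv -b\pmod{c^q}$ together with $0<a+b<2c^{q/p}<c^q$ is already absurd on size grounds, which shows that the whole difficulty of the theorem is concentrated exactly where your argument is silent. Moreover, even on components where the collapse succeeds, it yields $lm(a)-lm(b)\equiv lm(-1)\neq 0$, still consistent with $[lm(a)\neq lm(b)]$ rather than clashing with it; and your fallback (``$[a^2=b^2]_\ell$ for every $\ell\mid c$, hence $a^2=b^2$ outright'') leans on the second logarithm theorem's proof, which requires the product of the prime-power sub-moduli on which $a^2\equiv b^2$ to exceed $Q^2/P^5(Q)$ with $Q=c^q$; the best you could extract is the modulus $c^q=Q$, and $Q>Q^2/P^5(Q)$ would force $P(c)^5>c^q$, false for $q\geq 41$. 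Finally, note that the paper's own pivot --- the inequality $\frac{q}{p}\leq 6[(q-2)/39]$, whose constants $39$ and $6$ must encode whatever counting argument is intended --- has no counterpart in your proposal; in your version the threshold $41$ enters only in routine hypothesis-checking, which is a further sign that the actual engine of the purported proof is absent from both your reconstruction and, frankly, from the paper itself.
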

\begin{proof}Reduction to absurdity. Make logarithm on $a,b$ in mod $c^q$. The conditions are sufficient for a controversy.
\end{proof}

\end{document}